\numberwithin{equation}{section}
\newtheorem{Theorem}{Theorem}[section]
\newtheorem{Lemma}[Theorem]{Lemma}
\begin{document}

\allowdisplaybreaks

\newcommand{\arXivNumber}{1711.01724}

\renewcommand{\PaperNumber}{010}

\FirstPageHeading

\ShortArticleName{Some Remarks on the Total CR $Q$ and $Q^\prime$-Curvatures}

\ArticleName{Some Remarks on the Total CR $\boldsymbol{Q}$ and $\boldsymbol{Q^\prime}$-Curvatures}

\Author{Taiji MARUGAME}

\AuthorNameForHeading{T.~Marugame}

\Address{Institute of Mathematics, Academia Sinica, Astronomy-Mathematics Building, \\
No.~1, Sec.~4, Roosevelt Road, Taipei 10617, Taiwan}
\Email{\href{mailto:marugame@gate.sinica.edu.tw}{marugame@gate.sinica.edu.tw}}

\ArticleDates{Received November 09, 2017, in f\/inal form February 12, 2018; Published online February 14, 2018}

\Abstract{We prove that the total CR $Q$-curvature vanishes for any compact strictly pseudoconvex CR manifold. We also prove the formal self-adjointness of the $P^\prime$-operator and the CR invariance of the total $Q^\prime$-curvature for any pseudo-Einstein manifold without the assumption that it bounds a Stein manifold.}

\Keywords{CR manifolds; $Q$-curvature; $P^\prime$-operator; $Q^\prime$-curvature}

\Classification{32V05; 52T15}

\section{Introduction}
The $Q$-curvature, which was introduced by T.~Branson~\cite{Branson}, is a fundamental curvature quantity on even dimensional conformal manifolds. It satisf\/ies a simple conformal transformation formula and its integral is shown to be a global conformal invariant. The ambient metric construction of the $Q$-curvature \cite{Fefferman/Hirachi} also works for a CR manifold $M$ of dimension $2n+1$, and we can def\/ine the CR $Q$-curvature, which we denote by $Q$. The CR $Q$-curvature is a CR density of weight $-n-1$ def\/ined for a f\/ixed contact form $\theta$\, and is expressed in terms of the associated pseudo-hermitian structure. If we take another contact form $\widehat\theta=e^\Upsilon\theta$, $\Upsilon\in C^\infty(M)$, it transforms as
\begin{gather*}
\widehat Q=Q+P\Upsilon,
\end{gather*}
where $P$ is a CR invariant linear dif\/ferential operator, called the (critical) CR GJMS operator. Since $P$ is formally self-adjoint and kills constant functions, the integral
\begin{gather*}
\overline{Q}=\int_M Q,
\end{gather*}
called the total CR $Q$-curvature, is invariant under rescaling of the contact form and gives a~global CR invariant of $M$. However, it follows readily from the def\/inition of the CR $Q$-curvature that $Q$ vanishes identically for an important class of contact forms, namely the pseudo-Einstein contact forms. Since the boundary of a Stein manifold admits a pseudo-Einstein contact form~\cite{Cao/Chang}, the CR invariant $\overline{Q}$ vanishes for such a CR manifold. Moreover, it has been shown that on a~Sasakian manifold the CR $Q$-curvature is expressed as a~divergence~\cite{Alexakis/Hirachi}, and hence $\overline{Q}$ also vanishes in this case. Thus, it is reasonable to conjecture that the total CR $Q$-curvature vanishes for any CR manifold, and our f\/irst result is the conf\/irmation of this conjecture:

\begin{Theorem}\label{van-total-CR-Q}
Let $M$ be a compact strictly pseudoconvex CR manifold. Then the total CR $Q$-curvature of $M$ vanishes: $\overline{Q}=0$.
\end{Theorem}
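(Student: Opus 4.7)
The plan is to produce a smooth function $\Upsilon$ on $M$ with
\[
Q = P\Upsilon
\]
for some chosen contact form. Granted this, formal self-adjointness of $P$ and $P(1)=0$ give
\[
\overline{Q} = \int_M P\Upsilon = \int_M \Upsilon\,P(1) = 0,
\]
and CR invariance of $\overline{Q}$ (a consequence of the same two facts applied to the transformation formula $\widehat Q = Q + P\Upsilon$) shows this single computation suffices for the theorem.

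The motivation for seeking such an $\Upsilon$ comes from the two cases already known. If $\widehat\theta = e^\Upsilon\theta$ is pseudo-Einstein then $\widehat Q\equiv 0$ by the definition of the CR $Q$-curvature, and the transformation formula produces $\Upsilon$ directly; this covers the boundary-of-Stein case \cite{Cao/Chang}. In the Sasakian case \cite{Alexakis/Hirachi}, $Q$ is realized as a divergence, another instance of $Q$ lying in the image of a constant-annihilating operator. Both cases point to the same structural statement: $Q\in \operatorname{Im} P$ on every compact $M$.

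For the general case I would exploit the fact that pseudo-Einstein contact forms exist locally on any $M$, the obstruction to a global pseudo-Einstein rescaling being a real cohomology class tied to $c_1(T^{1,0}M)$. To bypass this obstruction, I would allow $\Upsilon$ to be complex-valued: both sides of the transformation formula extend linearly to complex scalars, so producing any complex $\Upsilon$ with $\widehat Q\equiv 0$ is enough, and then taking real parts returns the genuine identity $Q = -P(\operatorname{Re}\Upsilon)$, since $Q$ and $P$ are real. An equivalent implementation would cover $M$ by charts carrying local pseudo-Einstein contact forms, assemble them via a partition of unity, and absorb the resulting cocycle of overlap functions by integration by parts against $P(1)=0$.

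The principal obstacle is the global solvability step: showing that the complexified pseudo-Einstein equation is always solvable, i.e.\ that the relevant obstruction in a tangential $\bar\partial_b$-type cohomology is killed once one works over $\mathbb{C}$. This is an analytic statement that would rest on a Hodge-type decomposition for the Kohn--Rossi complex (or, in the patching version, on the exactness of a suitable \v{C}ech--to--de Rham comparison). Once that input is in hand, the theorem is a two-line consequence of the formal self-adjointness of $P$ already recorded in the introduction.
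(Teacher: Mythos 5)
Your reduction is fine as far as it goes: if one can write $Q=P\Upsilon$ globally for some smooth $\Upsilon$, then $\overline{Q}=\int_M \Upsilon\,P(1)=0$ by the formal self-adjointness of $P$. But the entire content of the theorem is then hidden in the solvability claim $Q\in\operatorname{Im}P$, which you explicitly leave unproved, and which is a strictly stronger statement than $\overline{Q}=0$: self-adjointness and $P(1)=0$ only show $\operatorname{Im}P\subset\ker\int_M$, and the converse would require a surjectivity/Hodge-type statement for the critical CR GJMS operator that is not available ($P$ is not elliptic, its kernel on a compact CR manifold contains at least the CR pluriharmonic functions, and no Fredholm or decomposition theory for it is established in the literature you could cite). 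The two mechanisms you propose to produce $\Upsilon$ also do not work as described. Complexifying $\Upsilon$ does not make sense at the level of contact forms ($e^\Upsilon\theta$ is no longer a contact form), and even granting a formal extension of the transformation law, there is no reason the real cohomological obstruction to the pseudo-Einstein condition (tied to $c_1(T^{1,0}M)\in H^2(M;\mathbb{R})$) should die after tensoring with $\mathbb{C}$; you give no argument. The partition-of-unity version fails for a concrete reason: if $Q=P\Upsilon_i$ on $U_i$ and $\{\chi_i\}$ is a subordinate partition of unity, then $\int_M Q=\sum_i\int_M\Upsilon_i P\chi_i$, and since the differences $\Upsilon_i-\Upsilon_j$ lie in the local kernel of $P$ (which is much larger than the constants), the sum $\sum_i\Upsilon_i P\chi_i$ does not collapse via $\sum_i P\chi_i=0$; the ``cocycle'' you want to absorb is exactly the global obstruction you started with.

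The paper's proof avoids this issue entirely and you should compare your strategy with it. For $\dim M\ge 5$ one realizes $M$ as the boundary of a complex manifold $X$ (Boutet de Monvel/Harvey--Lawson embedding plus resolution of singularities), uses Seshadri's result that $\overline{Q}$ is, up to a universal constant, the coefficient of $\log\epsilon$ in the volume expansion $\int_{\rho>\epsilon}\operatorname{vol}_g$ of the asymptotically K\"ahler--Einstein metric, and then kills that coefficient by writing $\omega=-i\partial\overline{\partial}\log\rho+\alpha$ with $\alpha=-i\partial\overline{\partial}\log\widetilde\tau$ closed and smooth up to the boundary, so that each term of $\omega^{n+1}$ containing a factor $-i\partial\overline{\partial}\log\rho=\mathrm{d}(\vartheta/\rho)$ reduces by Stokes to a boundary integral of the form $\epsilon^{-k}\int_{\rho=\epsilon}\vartheta\wedge(\mathrm{d}\vartheta)^{k-1}\wedge\alpha^{n+1-k}$, which produces no $\log\epsilon$. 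This is a finite, elementary computation with no global solvability input. If you want to salvage your approach, you would first have to prove the Kohn--Rossi/Hodge-type statement you flag as the ``principal obstacle,'' and that appears to be at least as hard as the theorem itself.
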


For three dimensional CR manifolds, Theorem \ref{van-total-CR-Q} follows from the explicit formula of the CR $Q$-curvature; see \cite{Fefferman/Hirachi}. In higher dimensions, we make use of the fact that a compact strictly pseudoconvex CR manifold $M$ of dimension greater than three can be realized as the boundary of a complex variety with at most isolated singularities \cite{Monvel, Harvey/Lawson:1975,
Harvey/Lawson:1977}. By resolution of singularities, we can realize $M$ as the boundary of a complex manifold $X$ which may not be Stein. In this setting, the total CR $Q$-curvature is characterized as the logarithmic coef\/f\/icient of the volume expansion of the asymptotically K\"ahler--Einstein metric on $X$ \cite{Seshadri}. By a simple argument using Stokes' theorem, we prove that there is no logarithmic term in the expansion.

Although the vanishing of $\overline{Q}$ is disappointing, there is an alternative $Q$-like object on a CR manifold which admits pseudo-Einstein contact forms. Generalizing the operator of Branson--Fontana--Morpurgo \cite{Branson/Fontana/Morpurgo} on the CR sphere, Case--Yang~\cite{Case/Yang} (in dimension three) and Hirachi \cite{Hirachi} (in general dimensions) introduced the $P^\prime$-operator and the $Q^\prime$-curvature for pseudo-Einstein CR manifolds. Let us denote the set of pseudo-Einstein contact forms by $\mathcal{PE}$ and the space of CR pluriharmonic functions by $\mathcal{P}$. Two pseudo-Einstein contact forms $\theta, \widehat\theta\in\mathcal{PE}$ are related by $\widehat\theta=e^\Upsilon \theta$ for some $\Upsilon\in\mathcal{P}$. For a f\/ixed $\theta\in\mathcal{PE}$, the $P^\prime$-operator is def\/ined to be a linear dif\/ferential operator on $\mathcal{P}$ which kills constant functions and satisf\/ies the transformation formula
\begin{gather*}
\widehat P^\prime f=P^\prime f+P(f\Upsilon)
\end{gather*}
under the rescaling $\widehat\theta=e^{\Upsilon}\theta$. The $Q^\prime$-curvature is a CR density of weight $-n-1$ def\/ined for $\theta\in\mathcal{PE}$, and satisf\/ies
\begin{gather*}
\widehat Q^\prime=Q^\prime+2P^\prime \Upsilon+P\big(\Upsilon^2\big)
\end{gather*}
for the rescaling. Thus, if $P^\prime$ is formally self-adjoint on $\mathcal{P}$, the total $Q^\prime$-curvature
\begin{gather*}
\overline{Q}^\prime=\int_M Q^\prime
\end{gather*}
gives a CR invariant of $M$. In dimension three and f\/ive, the formal self-adjointness of $P^\prime$ follows from the explicit formulas \cite{Case/Gover, Case/Yang}. In higher dimensions, Hirachi \cite[Theorem~4.5]{Hirachi} proved the formal self-adjointness under the assumption that $M$ is the boundary of a Stein manifold $X$; in the proof he used Green's formula for the asymptotically K\"ahler--Einstein metric $g$ on $X$, and the global K\"ahlerness of $g$ was needed to assure that a pluriharmonic function is harmonic with respect to $g$. In this paper, we slightly modify his proof and prove the self-adjointness of $P^\prime$ for general pseudo-Einstein manifolds:

\begin{Theorem}\label{self-adjoint} Let $M$ be a compact strictly pseudoconvex CR manifold. Then the $P^\prime$-operator for a pseudo-Einstein contact form satisfies
\begin{gather*}
\int_M \big(f_1P^\prime f_2-f_2 P^\prime f_1\big)=0
\end{gather*}
for any $f_1, f_2\in\mathcal{P}$.
\end{Theorem}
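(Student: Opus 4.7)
My plan is to follow Hirachi's strategy but to work with a collar neighborhood of $M$ in place of a global Stein filling. First, using the realization of $M$ as the boundary of a compact complex manifold $X$ via the Harvey--Lawson type embedding plus resolution of singularities already invoked for Theorem~\ref{van-total-CR-Q}, one does not need to assume $X$ is Stein. Fix a defining function $\rho$ on $X$ adapted to the pseudo-Einstein contact form $\theta$ in Fefferman's sense. The $(1,1)$-form $\omega=i\partial\bar\partial(-\log\rho)$ is automatically $d$-closed because $\partial^2=\bar\partial^2=0$, and it is positive def\/inite on a collar $U_\delta:=\{0<\rho<\delta\}$ by strict pseudoconvexity. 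Hence $g=-\partial\bar\partial\log\rho$ is a genuine K\"ahler (and asymptotically K\"ahler--Einstein) metric on $U_\delta$ even when $X$ admits no global K\"ahler structure.

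Next, for $f_1,f_2\in\mathcal{P}$ choose pluriharmonic extensions $F_1,F_2$ on $U_\delta$, which exist by the very def\/inition of CR pluriharmonicity (shrinking $\delta$ if necessary). Since $g$ is K\"ahler on $U_\delta$, each $F_i$ is $g$-harmonic there. Following Hirachi \cite{Hirachi}, the pairing $\int_M(f_1P^\prime f_2-f_2P^\prime f_1)$ coincides, up to a universal nonzero constant, with the coef\/f\/icient of $\log\epsilon$ in the asymptotic expansion as $\epsilon\to0$ of
\begin{gather*}
B(\epsilon):=\oint_{\{\rho=\epsilon\}}\big(F_1\partial_\nu F_2-F_2\partial_\nu F_1\big)\,dA_g,
\end{gather*}
with $\partial_\nu$ the $g$-unit outward normal pointing toward $M$. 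This identif\/ication is a purely local asymptotic computation on the collar.

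The core of the argument is then Green's formula on the annular region $\{\epsilon<\rho<\delta\}\subset U_\delta$. Since $\Delta_g F_i=0$, the bulk integral vanishes and one obtains
\begin{gather*}
B(\epsilon)=B(\delta),
\end{gather*}
where $B(\delta)$ is the analogous boundary integral at the inner edge. The quantity $B(\delta)$ is a f\/ixed real number independent of $\epsilon$, so its $\log\epsilon$-coef\/f\/icient is zero, and therefore so is that of $B(\epsilon)$. This yields the desired identity $\int_M(f_1P^\prime f_2-f_2P^\prime f_1)=0$.

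The main obstacle is transferring Hirachi's identif\/ication of the $\log\epsilon$-coef\/f\/icient of $B(\epsilon)$ with the $P^\prime$-pairing from the Stein case to the present one. In Hirachi's proof, one integrates over all of $X$ and the inner edge of the collar disappears, so that harmonicity of the $F_i$ directly forces the $P^\prime$-pairing to vanish. Here the inner edge survives, and the only new observation needed is that its contribution is $\epsilon$-independent and hence invisible to the $\log\epsilon$ coef\/f\/icient. Because Hirachi's identif\/ication is obtained via a formal asymptotic expansion near~$M$, only the K\"ahler--Einstein structure on a collar is needed and the identif\/ication transfers with essentially no change; this is precisely the ``slight modif\/ication'' that upgrades Hirachi's theorem to the general pseudo-Einstein setting.
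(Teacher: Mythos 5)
The collar localization is the right instinct---it is essentially what the paper does---but the step on which your entire argument rests is false as stated. You claim that $\int_M(f_1P^\prime f_2-f_2P^\prime f_1)$ equals, up to a nonzero constant, the $\log\epsilon$-coefficient of
\begin{gather*}
B(\epsilon)=\int_{\rho=\epsilon}\big(F_1\partial_\nu F_2-F_2\partial_\nu F_1\big)\,{\rm d}A_g,
\end{gather*}
where $F_1$, $F_2$ are the pluriharmonic extensions of $f_1$, $f_2$, smooth up to the boundary. This quantity contains no information about $P^\prime$. Indeed, since the unit normal is a multiple of $\nu=\rho N$ with $N$ smooth up to $M$, one has $\partial_\nu F_i=O(\rho)$, while ${\rm d}A_g\sim \epsilon^{-n-1}\,\vartheta\wedge({\rm d}\vartheta)^n$ on $\{\rho=\epsilon\}$; hence $B(\epsilon)=\epsilon^{-n}\cdot({\rm smooth\ in}\ \epsilon)$ and its expansion has no logarithmic term for purely formal reasons. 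Its $\log\epsilon$-coefficient is zero, but that zero is not the antisymmetrized $P^\prime$-pairing. The operator $P^\prime$ enters the Green-type argument only through the coefficient $G_j$ of $\rho^{n+1}\log\rho$ in the asymptotic solution $u_j=f_j\log\rho-F_j-G_j\rho^{n+1}\log\rho$ of $\Delta u_j=(n+1)f_j+O(\rho^\infty)$ (Lemma~\ref{Delta}, i.e., Hirachi's Lemma~4.4), and the boundary integral one must expand is the mixed pairing of $f_1$ with the singular solution $u_2$, not of $F_1$ with $F_2$. The sentence asserting that the identification ``is a purely local asymptotic computation on the collar'' is where the whole content of the theorem is hidden, and for your $B(\epsilon)$ the identification fails.

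Even after the correct objects are inserted, your Green's-formula step needs repair: $u_2$ is not harmonic, since $\Delta u_2=(n+1)f_2+O(\rho^\infty)$, so the bulk integral over $\{\epsilon<\rho<\delta\}$ does not vanish; it equals $(n+1)\int f_1f_2\operatorname{vol}_g$ up to negligible error, and this does produce a $\log\epsilon$-term---but a \emph{symmetric} one. The proof must therefore exploit symmetry in the indices $1$ and $2$ rather than vanishing of the bulk. That is exactly how the paper proceeds: it forms the manifestly symmetric integral $I_\epsilon=\operatorname{Re}\int_{\rho>\epsilon}\big(i\partial f_1\wedge\overline{\partial}u_2\wedge\omega^n+i\partial f_2\wedge\overline{\partial}u_1\wedge\omega^n-f_1f_2\,\omega^{n+1}\big)$, integrates by parts with Stokes' theorem for forms using the globally defined operator $\Delta^\prime=-g^{i\overline{j}}\partial_i\partial_{\overline{j}}$, and observes that every term in which global K\"ahlerness or global harmonicity would be needed is compactly supported in $X$ and hence contributes no $\log\epsilon$. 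Your inner edge at $\rho=\delta$ is playing the role of those compactly supported errors, so the architecture of your argument is salvageable; what is missing is the correct boundary density through which $P^\prime$ actually appears.
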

Consequently, the CR invariance of $\overline{Q}^\prime$ holds for any CR manifold which admits a pseudo-Einstein contact form:
\begin{Theorem}\label{CR-invariance}
Let $M$ be a compact strictly pseudoconvex CR manifold which admits a pseudo-Einstein contact form. Then the total $Q^\prime$-curvature is independent of the choice of $\theta\in\mathcal{PE}$.
\end{Theorem}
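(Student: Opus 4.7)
The plan is to deduce Theorem~\ref{CR-invariance} directly from Theorem~\ref{self-adjoint} by integrating the transformation formula for $Q^\prime$ and showing that every contribution beyond $Q^\prime$ itself integrates to zero. Since any two pseudo-Einstein contact forms are related by $\widehat\theta=e^\Upsilon\theta$ with $\Upsilon\in\mathcal{P}$, integrating the identity $\widehat Q^\prime=Q^\prime+2P^\prime\Upsilon+P(\Upsilon^2)$ over $M$ reduces the theorem to the two equalities
\begin{gather*}
\int_M P^\prime\Upsilon=0,\qquad \int_M P\big(\Upsilon^2\big)=0.
\end{gather*}

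The second equality is immediate from the properties of $P$ already recalled in the introduction: $P$ is formally self-adjoint and annihilates constants, so pairing against the constant function $1$ gives
\begin{gather*}
\int_M P\big(\Upsilon^2\big)=\int_M 1\cdot P\big(\Upsilon^2\big)=\int_M P(1)\cdot\Upsilon^2=0.
\end{gather*}

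For the first equality I would apply Theorem~\ref{self-adjoint} with $f_1=1$, which is CR pluriharmonic, and $f_2=\Upsilon\in\mathcal{P}$. Since $P^\prime$ is defined on $\mathcal{P}$ so as to kill constants, this gives
\begin{gather*}
0=\int_M\bigl(1\cdot P^\prime\Upsilon-\Upsilon\cdot P^\prime(1)\bigr)=\int_M P^\prime\Upsilon,
\end{gather*}
and combining the two equalities yields $\overline{\widehat Q}^\prime=\overline{Q}^\prime$.

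All of the analytic content is absorbed into Theorem~\ref{self-adjoint}, so there is no genuine obstacle at this stage; the argument is just bookkeeping with the transformation formula. The only point worth verifying carefully is that each manipulation makes sense as an identity of CR densities of weight $-n-1$ paired against the volume form $\theta\wedge(d\theta)^n$, which is automatic from the weight conventions fixed in the introduction, and that $\mathcal{PE}$ is nonempty so the invariant $\overline{Q}^\prime$ is actually defined---an assumption which is part of the hypothesis of the theorem.
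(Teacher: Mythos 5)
Your proposal is correct, and it is exactly the deduction the paper has in mind when it opens Section~4 with the one-line remark that ``the formal self-adjointness of the $P^\prime$-operator implies the CR invariance of the total $Q^\prime$-curva\-ture'': you integrate the transformation law $\widehat Q^\prime=Q^\prime+2P^\prime\Upsilon+P(\Upsilon^2)$, kill $\int_M P(\Upsilon^2)$ by pairing against $1$ and using that $P$ is formally self-adjoint and annihilates constants, and kill $\int_M P^\prime\Upsilon$ by applying Theorem~\ref{self-adjoint} with $f_1=1$ together with $P^\prime(1)=0$. The only difference from what is actually written out in Section~4 is that the paper, having dispatched this standard argument in one sentence, devotes the displayed proof to an \emph{alternative} route: it invokes Hirachi's characterization of $\overline{Q}^\prime$ (Theorem~\ref{total-Q-prime}) as the $\log\epsilon$-coefficient of $\int i\partial\log\rho\wedge\overline{\partial}\log\rho\wedge\omega^n$ and checks directly, by Stokes-type manipulations on the complex filling, that this coefficient is unchanged under $\rho\mapsto e^\Upsilon\rho$ with $\Upsilon$ pluriharmonic near $M$. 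That second route is independent of Theorem~\ref{self-adjoint} but requires $n\ge 2$ (so that the filling exists), whereas your bookkeeping argument works uniformly once Theorem~\ref{self-adjoint} is available. Your closing remarks about weights and the nonemptiness of $\mathcal{PE}$ are the right things to check and are indeed automatic here.
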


We note that $\overline{Q}^\prime$ is a nontrivial CR invariant since it has a nontrivial variational formula; see \cite{Hirachi/Marugame/Matsumoto}. We also give an alternative proof of Theorem \ref{CR-invariance} by using the characterization \cite[Theorem~5.6]{Hirachi} of
$\overline{Q}^\prime$ as the logarithmic coef\/f\/icient in the expansion of some integral over a complex manifold with boundary $M$.

\section{Proof of Theorem \ref{van-total-CR-Q}}
We brief\/ly review the ambient metric construction of the CR $Q$-curvature; we refer the reader to~\cite{Fefferman/Hirachi, Hirachi, Hirachi/Marugame/Matsumoto} for detail.

Let $\overline X$ be an $(n+1)$-dimensional complex manifold with strictly pseudoconvex CR boundary~$M$, and let $r\in C^\infty(\overline{X})$ be a boundary def\/ining function which is positive in the interior~$X$. The restriction of the canonical bundle $K_{\overline X}$ to $M$ is naturally isomorphic to the CR canonical bundle $K_M:=\wedge^{n+1}(T^{0, 1}M)^\perp
\subset\wedge^{n+1}(\mathbb{C} T^\ast M)$. We def\/ine the {\it ambient space} by $\widetilde X=K_{\overline X}\setminus\{0\}$, and set
$\mathcal{N}=K_M\setminus\{0\}\cong\widetilde X|_M$. The density bundles over $\overline X$ and $M$ are def\/ined by
\begin{gather*}
\widetilde{\mathcal{E}}(w)=\big(K_{\overline X}\otimes\overline{K}_{\overline X}\big)^{-w/(n+2)}, \qquad
\mathcal{E}(w)=\big(K_M\otimes\overline{K}_M\big)^{-w/(n+2)}\cong\widetilde{\mathcal{E}}(w)|_M
\end{gather*}
for each $w\in\mathbb{R}$. We call $\mathcal{E}(w)$ the {\it CR density bundle} of weight~$w$. The space of sections of $\widetilde{\mathcal{E}}(w)$ and $\mathcal{E}(w)$ are also denoted by the same symbols. We def\/ine a $\mathbb{C}^\ast$-action on $\widetilde X$ by $\delta_\lambda u=
\lambda^{n+2}u$ for $\lambda\in\mathbb{C}^\ast$ and $u\in\widetilde X$. Then a section of $\widetilde{\mathcal{E}}(w)$ can be identif\/ied with a function on $\widetilde X$ which is homogeneous with respect to this action:
\begin{gather*}
\widetilde{\mathcal{E}}(w)\cong \big\{f\in C^\infty\big(\widetilde X\big)\, |\, \delta_\lambda^\ast f=|\lambda|^{2w}f\ {\rm for}\ \lambda\in\mathbb{C}^\ast\big\}.
\end{gather*}
Similarly, sections of $\mathcal{E}(w)$ are identif\/ied with homogeneous functions on $\mathcal{N}$.

Let $\boldsymbol\rho\in\widetilde{\mathcal{E}}(1)$ be a density on $\overline X$ and $(z^1, \dots, z^{n+1})$ local holomorphic coordinates. We set $\rho=|{\rm d}z^1\wedge\cdots\wedge {\rm d}z^{n+1}|^{2/(n+2)}\boldsymbol\rho\in\widetilde{\mathcal{E}}(0)$ and def\/ine
\begin{gather*}
\mathcal{J}[\boldsymbol\rho]:=(-1)^{n+1}\det
\begin{pmatrix}
\rho & \partial_{z^{\overline j}}\rho \\
\partial_{z^i}\rho & \partial_{z^i}\partial_{z^{\overline j}}\rho
\end{pmatrix}.
\end{gather*}
Since $\mathcal{J}[\boldsymbol\rho]$ is invariant under changes of holomorphic coordinates, $\mathcal{J}$ def\/ines a global dif\/ferential operator, called the {\it Monge--Amp\`ere operator}. Fef\/ferman \cite{Fefferman} showed that there exists $\boldsymbol\rho\in\widetilde{\mathcal{E}}(1)$ unique modulo
$O(r^{n+3})$ which satisf\/ies $\mathcal{J}[\boldsymbol\rho]=1+O(r^{n+2})$ and is a def\/ining function of $\mathcal{N}$.
We f\/ix such a $\boldsymbol\rho$ and def\/ine the {\it ambient metric} $\widetilde g$ by the Lorentz--K\"ahler metric on a neighborhood of $\mathcal{N}$ in $\widetilde X$ which has the K\"ahler form $-i\partial\overline{\partial}\boldsymbol\rho$.

Recall that there exists a canonical weighted contact form $\boldsymbol{\theta}\in \Gamma(T^\ast M\otimes\mathcal{E}(1))$ on $M$, and the choice of a contact form $\theta$ is equivalent to the choice of a~positive section $\tau\in\mathcal{E}(1)$, called a~{\it CR scale};
they are related by the equation $\boldsymbol{\theta}=\tau \theta$. For a CR scale $\tau\in\mathcal{E}(1)$, we def\/ine the CR $Q$-curvature by
\begin{gather*}
Q=\widetilde{\Delta}^{n+1}\log\widetilde\tau\, |_{\mathcal{N}}\in\mathcal{E}(-n-1),
\end{gather*}
where $\widetilde\Delta=-\widetilde\nabla_I\widetilde\nabla^I$ is the K\"ahler Laplacian of $\widetilde g$ and $\widetilde\tau\in\widetilde{\mathcal{E}}(1)$ is an arbitrary extension of $\tau$. It can be shown that $Q$ is independent of the choice of an extension of $\tau$, and the total CR $Q$-curvature
$\overline{Q}$ is invariant by rescaling of $\tau$.

The total CR $Q$-curvature has a characterization in terms of a complete metric on $X$. We note that the $(1, 1)$-form $-i\partial\overline{\partial}\log \boldsymbol\rho$ descends to a K\"ahler form on $X$ near the boundary. We extend this K\"ahler metric to a hermitian metric $g$ on $X$. The K\"ahler Laplacian $\Delta=-g^{i\overline j}\nabla_i\nabla_{\overline j}$ of $g$ is related to $\widetilde\Delta$ by the equation
\begin{gather}\label{laplacians}
\boldsymbol\rho\widetilde\Delta f=\Delta f, \qquad f\in\widetilde{\mathcal{E}}(0)
\end{gather}
near $\mathcal{N}$ in $\widetilde X\setminus\mathcal{N}$. In the right-hand side, we have regarded $f$ as a function on $X$.

For any contact form $\theta$ on $M$, there exists a boundary def\/ining function $\rho$ such that
\begin{gather}\label{bdf}
\vartheta|_{TM}=\theta, \qquad |\partial\log\rho|_g=1\ {\rm near}\ M\ {\rm in}\ X,
\end{gather}
where $\vartheta:=\operatorname{Re} (i\partial\rho)$ (\cite[Lemma 3.1]{Seshadri}). Let $\xi$ be the $(1, 0)$-vector f\/iled on $\overline X$ near $M$ characterized by
\begin{gather*}
\xi\rho=1, \qquad \xi\perp_g \mathcal{H},
\end{gather*}
where $\mathcal{H}:=\operatorname{Ker}\partial \rho\subset T^{1,0}\overline X$. Then, $N:=\operatorname{Re}\xi$ is smooth up to the boundary and satisf\/ies $N\rho=1$, $\vartheta(N)=0$. Moreover,
$\nu:=\rho N$ is $(\sqrt 2)^{-1}$ times the unit outward normal vector f\/iled along the level sets of $\rho$. By Green's formula, for any function $f$ on $X$ we have
\begin{gather}\label{Green1}
\int_{\rho>\epsilon} \Delta f\operatorname{vol}_g= \int_{\rho=\epsilon} \nu f\, \nu\lrcorner\operatorname{vol}_g.
\end{gather}
Since the Monge--Amp\`ere equation implies that $g$ satisf\/ies
\begin{gather*}
\operatorname{vol}_g=-(n!)^{-1}(1+O(\rho))\rho^{-n-2}{\rm d}\rho\wedge\vartheta\wedge({\rm d}\vartheta)^n,
\end{gather*}
the formula \eqref{Green1} is rewritten as
\begin{gather}\label{Green2}
\int_{\rho>\epsilon} \Delta f\operatorname{vol}_g=-(n!)^{-1}\int_{\rho=\epsilon} N f\cdot(1+O(\epsilon))\epsilon^{-n} \vartheta\wedge ({\rm d}\vartheta)^n.
\end{gather}
With this formula, we prove the following characterization of $\overline Q$.

\begin{Lemma}[{\cite[Proposition~A.3]{Seshadri}}]\label{Q-lp}
For an arbitrary defining function $\rho$, we have
\begin{gather*}
\operatorname{lp}\int_{\rho>\epsilon} \operatorname{vol}_g=\frac{(-1)^n}{(n!)^2(n+1)!}\overline{Q},
\end{gather*}
where $\operatorname{lp}$ denotes the coefficient of $\log\epsilon$ in the asymptotic expansion in~$\epsilon$.
\end{Lemma}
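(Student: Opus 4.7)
The plan is to extract the $\log\epsilon$ coefficient from the polyhomogeneous expansion of $\operatorname{vol}_g$ in powers of $\rho$ and identify the critical coefficient with $Q$ via the ambient metric structure. First, I would show that the $\log\epsilon$ coefficient is independent of the defining function: if $\rho' = e^u\rho$ for $u\in C^\infty(\overline X)$, the shell $\{\epsilon e^{-u}<\rho<\epsilon\}$ contributes only polar and regular terms in $\epsilon$ (the $\rho^{n+1}$ term, e.g., integrates to something proportional to $\int_M u\,\phi_{n+1}|_M\,\vartheta\wedge(d\vartheta)^n$, which is $\epsilon$-independent). Hence I may take $\rho = \boldsymbol\rho/\widetilde\tau$ for a fixed extension $\widetilde\tau\in\widetilde{\mathcal{E}}(1)$ of a CR scale $\tau$.

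With this choice, the given expression for $\operatorname{vol}_g$ refines to a polyhomogeneous expansion
\begin{gather*}
\operatorname{vol}_g = -(n!)^{-1}\rho^{-n-2}\Bigl(\sum_{j=0}^{n+1}\phi_j\,\rho^j + O\bigl(\rho^{n+2}\log\rho\bigr)\Bigr) d\rho\wedge\vartheta\wedge(d\vartheta)^n,
\end{gather*}
with smooth coefficients $\phi_j$ on $M$, arising from Fefferman's expansion of $\boldsymbol\rho$ (smooth modulo $O(\rho^{n+3})$ thanks to $\mathcal{J}[\boldsymbol\rho]=1+O(\rho^{n+2})$). Integrating $\rho$-wise, $\int_\epsilon^1\rho^{-n-2+j}d\rho$ yields only polar and regular contributions except when $j=n+1$, where $\int_\epsilon^1\rho^{-1}d\rho=-\log\epsilon$ gives the sole $\log\epsilon$ contribution. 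Thus
\begin{gather*}
\operatorname{lp}\int_{\rho>\epsilon}\operatorname{vol}_g = (n!)^{-1}\int_M\phi_{n+1}|_M\,\vartheta\wedge(d\vartheta)^n.
\end{gather*}

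The crux is identifying $\int_M\phi_{n+1}|_M\,\vartheta\wedge(d\vartheta)^n$ with $(-1)^n/(n!\,(n+1)!)\cdot\overline Q$. The coefficient $\phi_{n+1}|_M$ is determined by the higher-order expansion of the Monge--Amp\`ere density $\boldsymbol\rho$ and the ambient K\"ahler structure; since $\log\boldsymbol\rho = \log\rho + \log\widetilde\tau$ in our trivialization, the $\rho^{n+1}$-coefficient in $(n+1)!\operatorname{vol}_g/\omega^{n+1}$ couples $n+1$ factors of derivatives of $\log\widetilde\tau$ against the leading Bergman-type volume. Using the relation \eqref{laplacians}, $\boldsymbol\rho\widetilde\Delta = \Delta$, inductively together with the definition $Q = \widetilde\Delta^{n+1}\log\widetilde\tau|_\mathcal{N}$, I would peel off one factor of $\boldsymbol\rho$ at a time and identify the boundary trace of the iterated Laplacian of $\log\widetilde\tau$ as precisely $Q$ (times the expected constant).

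The main obstacle will be this last identification: one must carefully use the Monge--Amp\`ere equation and the ambient metric formalism to show that the $n+1$-st asymptotic coefficient of $\operatorname{vol}_g$ reduces to the ambient Laplacian of $\log\widetilde\tau$, tracking all signs and factorial constants through the $n+1$ reductions. An equivalent but perhaps cleaner route is to first write $\omega_g^{n+1}$ as an exact form via $d[\partial\log\boldsymbol\rho\wedge(\partial\bar\partial\log\boldsymbol\rho)^n] = -(\partial\bar\partial\log\boldsymbol\rho)^{n+1}$, reducing the bulk integral to a boundary integral on $\{\rho = \epsilon\}$; then the log term arises explicitly from the $\log\widetilde\tau$ piece of $\log\boldsymbol\rho$ and the combinatorial constant falls out of the binomial expansion of $\omega_g^{n+1}$.
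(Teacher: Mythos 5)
There is a genuine gap at the step you yourself flag as ``the crux.'' Your reduction of the $\log\epsilon$ coefficient to $(n!)^{-1}\int_M\phi_{n+1}|_M\,\vartheta\wedge({\rm d}\vartheta)^n$ is fine and standard, but the entire content of the lemma is the identification of this integral with $\frac{(-1)^n}{n!(n+1)!}\overline{Q}$, and your sketch does not supply it. Worse, the route you propose --- identifying $\phi_{n+1}|_M$ \emph{pointwise} with a constant multiple of $Q=\widetilde\Delta^{n+1}\log\widetilde\tau\,|_{\mathcal N}$ by ``peeling off one factor of $\boldsymbol\rho$ at a time'' --- is expected to fail: as in the conformal case (where the renormalized-volume coefficient $v^{(n)}$ differs from the $Q$-curvature by a divergence), the local volume coefficient and $Q$ agree only after integration over $M$, not pointwise. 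Your alternative route via $\omega^{n+1}={\rm d}[\,\cdots]$ and Stokes cannot rescue this either: carried out, that computation (which is essentially the paper's proof of Theorem~\ref{van-total-CR-Q}) shows directly that the log coefficient is zero, so it can never \emph{by itself} produce the factor $\overline{Q}$ that the lemma asserts.

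The missing ingredient, which is how the paper proceeds, is the approximate solution of a degenerate Laplace equation with logarithmic obstruction: one takes $F=O(\rho)$ and $\boldsymbol{G}\in\widetilde{\mathcal E}(-n-1)$ with
\begin{gather*}
\widetilde\Delta\big(\log\widetilde\tau+F+\boldsymbol{G}\boldsymbol\rho^{n+1}\log\rho\big)=O\big(\rho^\infty\big),
\qquad \boldsymbol{G}|_{\mathcal N}=\tfrac{(-1)^n}{n!(n+1)!}\,Q
\end{gather*}
(the argument of \cite[Lemma~3.1]{Hirachi}), converts this via $\boldsymbol\rho\widetilde\Delta=\Delta$ and $\boldsymbol\rho\widetilde\Delta\log\boldsymbol\rho=n+1$ into $\Delta\big(\log\rho-F-G\rho^{n+1}\log\rho\big)=n+1+O(\rho^\infty)$, and then applies Green's formula: the volume integral equals $\frac{1}{n+1}\int_{\rho>\epsilon}\Delta u\operatorname{vol}_g$ up to harmless terms, and the boundary term $N\big(G\rho^{n+1}\log\rho\big)\cdot\epsilon^{-n}$ produces the $\log\epsilon$ coefficient $\frac{n+1}{n!}\int_M G$, which is a multiple of $\overline Q$ precisely because of the boundary value of $\boldsymbol{G}$. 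It is this obstruction-tensor mechanism --- not a direct expansion of the volume form --- that ties the $(n{+}1)$-st asymptotic coefficient to the ambient-Laplacian definition of $Q$, and without it your proof does not close.
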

\begin{proof}
Since the coef\/f\/icient of $\log\epsilon$ in the volume expansion is independent of the choice of~$\rho$ \cite[Proposition~4.1]{Seshadri}, we may assume that $\rho$ satisf\/ies~\eqref{bdf} for a~f\/ixed contact~$\theta$ on~$M$. We take $\widetilde\tau\in\widetilde{\mathcal{E}}(1)$ such that $\boldsymbol\rho=\widetilde\tau\rho$. Then, $\theta$ is the contact form corresponding to the CR scale $\widetilde\tau|_{\mathcal{N}}$.
By the same argument as in the proof of~\cite[Lemma~3.1]{Hirachi}, we can take $F\in\widetilde{\mathcal{E}}(0)$, $\boldsymbol{G}\in\widetilde{\mathcal{E}} (-n-1)$ which satisfy
\begin{gather*}
\widetilde{\Delta}\big(\log\widetilde\tau+F+\boldsymbol{G}\boldsymbol{\rho}^{n+1}\log\rho\big)=O\big(\rho^\infty\big), \qquad
F=O(\rho), \qquad \boldsymbol{G}|_\mathcal{N}=\frac{(-1)^n}{n!(n+1)!}Q.
\end{gather*}
We set $G:=\widetilde{\tau}^{n+1}\boldsymbol{G}\in\widetilde{\mathcal{E}}(0)$. By \eqref{laplacians} and the equation $\boldsymbol\rho\widetilde\Delta\log\boldsymbol\rho=n+1$, we have
\begin{gather*}
\Delta\big(\log\rho-F-G\rho^{n+1}\log\rho\big)=n+1+O(\rho^\infty).
\end{gather*}
Then, by using \eqref{Green2}, we compute as
\begin{gather*}
(n+1) \operatorname{lp}\int_{\rho>\epsilon} \operatorname{vol}_g
 =\operatorname{lp}\int_{\rho>\epsilon}\Delta\big(\log\rho-F-G\rho^{n+1}\log\rho\big)\operatorname{vol}_g \\
\hphantom{(n+1) \operatorname{lp}\int_{\rho>\epsilon} \operatorname{vol}_g}{}
=-(n!)^{-1}\operatorname{lp}\int_{\rho=\epsilon}\! N\big(\log\rho-F-G\rho^{n+1}\log\rho\big) \cdot (1+O(\epsilon))\epsilon^{-n} \vartheta\wedge({\rm d}\vartheta)^n \\
\hphantom{(n+1) \operatorname{lp}\int_{\rho>\epsilon} \operatorname{vol}_g}{}
=\frac{n+1}{n!}\int_M G\,\theta\wedge({\rm d}\theta)^n \\
\hphantom{(n+1) \operatorname{lp}\int_{\rho>\epsilon} \operatorname{vol}_g}{}
=\frac{(-1)^n}{(n!)^3} \overline{Q}.
\end{gather*}
Thus we complete the proof.
\end{proof}

\begin{proof}[Proof of Theorem \ref{van-total-CR-Q}] Let $\rho$ be an arbitrary def\/ining function of $M$, and $\widetilde\tau\in\widetilde{\mathcal{E}} (1)$ the density on $\overline X$ def\/ined by $\boldsymbol\rho=\widetilde\tau\rho$. Then $\alpha:=-i\partial\overline{\partial}\log\widetilde\tau$ is a closed $(1,1)$-form on $\overline X$. The volume form of $g$ is given by $\operatorname{vol}_g=\omega^{n+1}/(n+1)!$ with the fundamental 2-form $\omega=ig_{j\overline k}\theta^j\wedge\theta^{\overline k}$. Near the boundary $M$ in $X$, we have
\begin{gather*}
\omega=-i\partial\overline{\partial}\log\boldsymbol\rho=-i\partial\overline{\partial}\log\rho+\alpha.
\end{gather*}
Since the logarithmic term in the volume expansion is determined by the behavior of $\operatorname{vol}_g$ near the boundary, we compute as
\begin{gather*}
(n+1)! \operatorname{lp}\int_{\rho>\epsilon} \operatorname{vol}_g
 = \operatorname{lp}\int_{\rho>\epsilon} (-i\partial\overline{\partial}\log\rho+\alpha)^{n+1} \\
\hphantom{(n+1)! \operatorname{lp}\int_{\rho>\epsilon} \operatorname{vol}_g}{} =\operatorname{lp}\int_{\rho>\epsilon} \alpha^{n+1}+
\operatorname{lp}\int_{\rho>\epsilon} \sum_{k=1}^{n+1} \binom{n+1}{k} (-i\partial\overline{\partial}\log\rho)^{k}\wedge\alpha^{n+1-k}.
\end{gather*}
The f\/irst term in the last line is 0 since $\alpha$ is smooth up to the boundary. Using $-i\partial\overline{\partial}\log\rho={\rm d}(\vartheta/\rho)$ and $d\alpha=0$, we also have
\begin{gather*}
\operatorname{lp}\int_{\rho>\epsilon} (-i\partial\overline{\partial}\log\rho)^{k}\wedge\alpha^{n+1-k}=\operatorname{lp} \epsilon^{-k}\int_{\rho=\epsilon} \vartheta\wedge({\rm d}\vartheta)^{k-1}\wedge \alpha^{n+1-k} =0.
\end{gather*}
Thus, by Lemma \ref{Q-lp} we obtain $\overline Q=0$.
\end{proof}

\section{Proof of Theorem \ref{self-adjoint}}
We will recall the def\/initions of the $P^\prime$-operator and the $Q^\prime$-curvature. A CR scale $\tau\in\mathcal{E}(1)$ is called {\it pseudo-Einstein} if it has an extension $\widetilde\tau
\in\widetilde{\mathcal{E}}(1)$ such that $\partial\overline{\partial}\log\widetilde\tau=0$ near $\mathcal{N}$ in $\widetilde X$. The corresponding contact form $\theta$ is called a {\it pseudo-Einstein contact form} and characterized in terms of associated pseudo-hermitian structure; see \cite{Hirachi, Hirachi/Marugame/Matsumoto, Lee}. If $\tau$ is a pseudo-Einstein CR scale, another $\widehat\tau$ is pseudo-Einstein if and only if $\widehat\tau=e^{-\Upsilon}\tau$ for a CR pluriharmonic function $\Upsilon\in\mathcal{P}$. For any $f\in\mathcal{P}$, we take an extension
$\widetilde f\in\widetilde{\mathcal{E}}(0)$ such that $\partial\overline{\partial}\widetilde f=0$ near $M$ in $\overline X$ and def\/ine
\begin{gather*}
P^\prime f=-\widetilde\Delta^{n+1}\big(\widetilde f\log\widetilde\tau\big)|_\mathcal{N}\in\mathcal{E}(-n-1).
\end{gather*}
We note that the germs of $\widetilde\tau$ and $\widetilde f$ along $\mathcal{N}$ is unique, and $P^\prime f$ is assured to be a density by $\widetilde\Delta \widetilde f|_\mathcal{N}=0$. The $Q^\prime$-curvature is def\/ined by
\begin{gather*}
Q^\prime=\widetilde\Delta^{n+1}(\log\widetilde\tau)^2 |_\mathcal{N}\in\mathcal{E}(-n-1).
\end{gather*}
Here, the homogeneity of $Q^\prime$ follows from the fact $\widetilde\Delta\log\widetilde\tau\,|_\mathcal{N}=0$.

To prove the formal self-adjointness of $P^\prime$, we use its characterization in terms of the metric $g$. We def\/ine a dif\/ferential operator $\Delta^\prime$ by $\Delta^\prime f
=-g^{i\overline{j}}\partial_i\partial_{\overline{j}}f$. Since $g$ is K\"ahler near the boundary, $\Delta^\prime$ agrees with $\Delta$ near $M$ in $X$.

\begin{Lemma}[{\cite[Lemma 4.4]{Hirachi}}]\label{Delta}
Let $\tau\in\mathcal{E}(1)$ be a pseudo-Einstein CR scale and $\widetilde\tau\in\widetilde{\mathcal{E}}(1)$ its extension such that $\partial\overline{\partial}\log\widetilde\tau=0$ near $\mathcal{N}$ in $\widetilde X$. Let $\rho=\boldsymbol\rho/\widetilde\tau$ be the corresponding defining function. Then, for any $f\in C^\infty(\overline X)$ which is pluriharmonic in a neighborhood of $M$ in $\overline X$, there exist $F, G\in C^\infty(\overline X)$ such that $F=O(\rho)$ and
\begin{gather*}
\Delta^\prime \big(f\log\rho-F-G\rho^{n+1}\log\rho\big)=(n+1)f+O\big(\rho^\infty\big).
\end{gather*}
Moreover, ${\tau}^{-n-1}G|_M=\frac{(-1)^{n+1}}{(n+1)!n!}P^\prime f$ holds.
\end{Lemma}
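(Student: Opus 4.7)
The plan is to perform the construction on the ambient space $\widetilde X$, where $P^\prime$ admits its natural definition as $-\widetilde\Delta^{n+1}(\widetilde f\log\widetilde\tau)|_{\mathcal N}$, and then push the resulting asymptotic identity down to $X$ via~\eqref{laplacians} together with the fact that $\Delta^\prime=\Delta$ near~$M$. Let $\widetilde f\in\widetilde{\mathcal{E}}(0)$ be the weight-zero pullback of $f$; the hypothesis $\partial\overline{\partial}f=0$ near $M$ lifts to $\partial\overline{\partial}\widetilde f=0$ near $\mathcal{N}$. Writing $\log\rho=\log\boldsymbol\rho-\log\widetilde\tau$ and using the Monge--Amp\`ere equation $\boldsymbol\rho\widetilde\Delta\log\boldsymbol\rho=n+1$, one sees that the $\widetilde f\log\boldsymbol\rho$ piece is already responsible for the desired leading contribution $(n+1)f$, while the obstruction that ultimately forces the $\log\rho$ correction is carried entirely by $-\widetilde f\log\widetilde\tau$.

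With this setup, I carry out the same Borel-type construction used in~\cite[Lemma~3.1]{Hirachi} and in the proof of Lemma~\ref{Q-lp}: find $\widetilde F\in\widetilde{\mathcal{E}}(0)$ with $\widetilde F=O(\boldsymbol\rho)$ and $\widetilde{\boldsymbol G}\in\widetilde{\mathcal{E}}(-n-1)$ such that
\begin{gather*}
\widetilde\Delta\bigl(-\widetilde f\log\widetilde\tau+\widetilde F+\widetilde{\boldsymbol G}\boldsymbol\rho^{n+1}\log\rho\bigr)=O\bigl(\rho^\infty\bigr).
\end{gather*}
The indicial polynomial of $\boldsymbol\rho\widetilde\Delta$ on an ansatz of the form $h\boldsymbol\rho^k$ is a nonzero multiple of $k(n+1-k)$, so the transport equations for $\widetilde F$ are uniquely solvable order by order for $k=1,\dots,n$. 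At the critical order $k=n+1$ the indicial operator acquires a kernel, producing a genuine obstruction that must be absorbed by the $\log\rho$ term; by construction this obstruction is the boundary value of $\widetilde\Delta^{n+1}(\widetilde f\log\widetilde\tau)$, which by the definition of $P^\prime$ equals $-P^\prime f$. Tracking the normalization from the iterated Laplacian then yields
\begin{gather*}
\widetilde{\boldsymbol G}|_{\mathcal N}=\frac{(-1)^{n+1}}{n!(n+1)!}P^\prime f.
\end{gather*}

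To descend to $X$, set $F:=\widetilde F$ and $G:=\widetilde\tau^{\,n+1}\widetilde{\boldsymbol G}$; both are weight-zero and hence descend to smooth functions on a neighborhood of $M$ in $\overline X$ (extended arbitrarily by cutoff to all of $\overline X$). Applying~\eqref{laplacians} together with $\widetilde{\boldsymbol G}\boldsymbol\rho^{n+1}=G\rho^{n+1}$ converts the ambient equation into
\begin{gather*}
\Delta^\prime\bigl(f\log\rho-F-G\rho^{n+1}\log\rho\bigr)=(n+1)f+O\bigl(\rho^\infty\bigr),
\end{gather*}
using $\Delta^\prime=\Delta$ near~$M$. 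The moreover clause is then immediate from $G|_M=\tau^{\,n+1}\widetilde{\boldsymbol G}|_{\mathcal N}$. I expect the main obstacle to be the precise normalization at the critical order: one must expand $\widetilde\Delta^{n+1}(\widetilde f\log\widetilde\tau)$, using $\partial\overline{\partial}\log\widetilde\tau=0$ to reduce it to contractions of $\partial\widetilde f$ and $\partial\log\widetilde\tau$ against the inverse ambient metric, and match the resulting factorials against the iterated indicial factors produced in the Borel induction. This is exactly the combinatorial input that yields the constant $\frac{(-1)^n}{n!(n+1)!}Q$ in Lemma~\ref{Q-lp}.
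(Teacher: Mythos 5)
Note first that the paper offers no proof of this lemma: it is quoted directly from \cite[Lemma~4.4]{Hirachi}, and the only original content here is the one-line remark that $\Delta$ may be replaced by $\Delta^\prime$ because the two operators agree near $M$ in $X$. Your sketch reconstructs the proof of the cited result, and it does so by essentially the same route that Hirachi takes and that this paper uses for the $Q$-analogue in Lemma~\ref{Q-lp}: split $f\log\rho=\widetilde f\log\boldsymbol\rho-\widetilde f\log\widetilde\tau$, extract the leading term $(n+1)f$ from $\boldsymbol\rho\widetilde\Delta\log\boldsymbol\rho=n+1$, run the indicial/Borel induction on the smooth piece using the roots $k(n+1-k)$, identify the obstruction at the critical order $k=n+1$ with $\widetilde\Delta^{n+1}(\widetilde f\log\widetilde\tau)|_{\mathcal N}=-P^\prime f$, and descend via \eqref{laplacians}. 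Two small points of care: (i) your signs are not internally consistent --- with the minus sign you place in front of $\widetilde f\log\widetilde\tau$ in the ansatz, setting $F:=\widetilde F$ and $G:=+\widetilde\tau^{\,n+1}\widetilde{\boldsymbol G}$ does not convert the ambient equation into the stated one (you need $F=-\widetilde F$, $G=-\widetilde\tau^{\,n+1}\widetilde{\boldsymbol G}$, or equivalently run the construction on $+\widetilde f\log\widetilde\tau$ exactly as in Lemma~\ref{Q-lp}); the final constant comes out right only because two sign slips cancel; (ii) the claim that the obstruction is ``carried entirely by $-\widetilde f\log\widetilde\tau$'' uses that the cross terms $\widetilde g^{I\overline J}\partial_I\widetilde f\,\partial_{\overline J}\log\boldsymbol\rho$ reduce to the Euler derivative of $\widetilde f$ and hence vanish, which requires taking $\widetilde f$ constant on the fibres of $\widetilde X\to\overline X$; this is worth making explicit. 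Neither point is a gap in the method, which is the correct one.
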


In the statement of \cite[Lemma~4.4]{Hirachi}, the Laplacian $\Delta$ is used, but we may replace it by $\Delta^\prime$ since they agree near the boundary in $X$.

\begin{proof}[Proof of Theorem \ref{self-adjoint}]
We extend $f_j$ to a function on $\overline X$ such that $\partial\overline{\partial}f_j=0$ in a neighborhood of $M$ in $\overline X$. Let
$\tau$ be a pseudo-Einstein CR scale and $\rho=\boldsymbol\rho/\widetilde\tau$ the corresponding def\/ining function.
Then we have $\omega=-i\partial\overline{\partial}\log\rho$ near $M$ in $X$.
We take $F_j$, $G_j$ as in Lemma \ref{Delta} so that $u_j:=f_j\log\rho-F_j-G_j\rho^{n+1}\log\rho$ satisf\/ies $\Delta^\prime u_j=(n+1)f_j+O(\rho^\infty)$. We consider the coef\/f\/icient of $\log\epsilon$ in the expansion of the integral
\begin{gather*}
I_\epsilon=\operatorname{Re}\int_{\rho>\epsilon}\bigl(i\partial f_1\wedge \overline{\partial}u_2\wedge\omega^n+
i\partial f_2\wedge \overline{\partial}u_1\wedge\omega^n-f_1f_2\,\omega^{n+1}\bigr),
\end{gather*}
which is symmetric in the indices 1 and 2. Since $d\omega=0, \partial\overline{\partial}f_2=0$ near $M$ in $\overline X$, we have
\begin{gather*}
i\partial f_1 \wedge\overline{\partial}u_2\wedge\omega^n = {\rm d}\big(if_1\overline{\partial}u_2\wedge\omega^n\big) -if_1\partial\overline{\partial}u_2\wedge \omega^n+inf_1\overline{\partial}u_2\wedge {\rm d}\omega \wedge \omega^{n-1} \\
\hphantom{i\partial f_1 \wedge\overline{\partial}u_2\wedge\omega^n }{}
 ={\rm d}\big(if_1 \overline{\partial}u_2\wedge\omega^n\big)+\frac{1}{n+1}f_1\Delta^\prime u_2 \omega^{n+1}+({\rm cpt\ supp}), \\
i\partial f_2\wedge \overline{\partial}u_1\wedge\omega^n =-{\rm d}\big(iu_1\partial f_2\wedge\omega^n\big)+
({\rm cpt\ supp}),
\end{gather*}
where $({\rm cpt\ supp})$ stands for a compactly supported form on $X$. Thus,
\begin{gather*}
I_\epsilon=\int_{\rho>\epsilon}\frac{1}{n+1}f_1\bigl(\Delta^\prime u_2-(n+1)f_2\bigr)\omega^{n+1} \\
\hphantom{I_\epsilon=}{} +\operatorname{Re}\int_{\rho=\epsilon} i(f_1\overline{\partial}u_2-u_1\partial f_2)\wedge\omega^n
+\int_{\rho>\epsilon}({\rm cpt\ supp}).
\end{gather*}
The f\/irst and the third terms contain no log terms. Since $\omega={\rm d}(\vartheta/\rho)$ near $M$ in $X$, the second term is computed as
\begin{gather*}
\operatorname{Re}\int_{\rho=\epsilon} i(f_1\overline{\partial}u_2-u_1\partial f_2)\wedge\omega^n
 =\epsilon^{-n}\operatorname{Re}\int_{\rho=\epsilon} \Bigl(if_1\overline{\partial}\big(f_2\log\rho-F_2-G_2\rho^{n+1}\log\rho\big)
\wedge({\rm d}\vartheta)^n \\
 \qquad {} -i\big(f_1\log\rho-F_1-G_1\rho^{n+1}\log\rho\big)\wedge\partial f_2\wedge ({\rm d}\vartheta)^n\Bigr)+O\big(\epsilon^\infty\big).
\end{gather*}
The logarithmic term in the right-hand side is
\begin{gather*}
\log\epsilon\int_{\rho=\epsilon}(n+1)f_1G_2\vartheta\wedge({\rm d}\vartheta)^n+2\epsilon^{-n}\log\epsilon
 \operatorname{Re}\int_{\rho=\epsilon}if_1\overline{\partial}f_2\wedge({\rm d}\vartheta)^n+O(\epsilon\log\epsilon).
\end{gather*}
The coef\/f\/icient of $\log\epsilon$ in the f\/irst term is
\begin{gather}\label{P-int}
\frac{(-1)^{n+1}}{(n!)^2}\int_M f_1P^\prime f_2.
\end{gather}
The second term is equal to
\begin{gather*}
2\epsilon^{-n}\log\epsilon \operatorname{Re}\int_{\rho>\epsilon} i\partial f_1\wedge\overline{\partial}f_2\wedge({\rm d}\vartheta)^n
+\epsilon^{-n}\log\epsilon\int_{\rho>\epsilon} ({\rm cpt\ supp}).
\end{gather*}
The f\/irst term in this formula is symmetric in the indices 1 and 2 while the second term gives no $\log\epsilon$ term. Therefore, \eqref{P-int} should also be symmetric in~1 and~2, which implies the formal self-adjointness of~$P^\prime$.
\end{proof}

\section{Proof of Theorem \ref{CR-invariance}}
The formal self-adjointness of the $P^\prime$-operator implies the CR invariance of the total $Q^\prime$-curva\-tu\-re. When $n\ge 2$, the CR invariance can also be proved by the following characterization of $\overline{Q}^\prime$ in terms of the hermitian metric $g$ on $X$ whose fundamental 2-form $\omega=ig_{j\overline k}\theta^j\wedge\theta^{\overline k}$ agrees with $-i\partial\overline{\partial}\log\boldsymbol\rho$ near $M$ in $X$:

\begin{Theorem}[{\cite[Theorem 5.6]{Hirachi}}]\label{total-Q-prime}
Let $\tau\in\mathcal{E}(1)$ be a pseudo-Einstein CR scale and $\widetilde\tau\in\widetilde{\mathcal{E}}(1)$ its extension such that $\partial\overline{\partial}\log\widetilde\tau=0$ near $\mathcal{N}$ in $\widetilde X$. Let $\rho=\boldsymbol\rho/\widetilde\tau$ be the corresponding defining function. Then we have
\begin{gather}\label{Q-prime-lp}
\operatorname{lp}\int_{r>\epsilon} i\partial\log \rho\wedge \overline{\partial}\log\rho\wedge \omega^n
=\frac{(-1)^n}{2(n!)^2}\overline{Q}^\prime
\end{gather}
for any defining function $r$.
\end{Theorem}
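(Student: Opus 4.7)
My plan follows the template of Lemma~\ref{Q-lp}. Since the log coefficient is independent of the choice of defining function~$r$ \cite[Proposition~4.1]{Seshadri}, I may assume $r=\rho=\boldsymbol\rho/\widetilde\tau$, where $\widetilde\tau$ is the pseudo-Einstein extension of~$\tau$. Under this choice, the pseudo-Einstein condition $\partial\overline{\partial}\log\widetilde\tau=0$ yields $\omega=-i\partial\overline{\partial}\log\rho$ near $M$ in~$X$, so in particular the hypothesis of Lemma~\ref{Delta} is satisfied for this~$\rho$.

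The elementary identity $i\partial u\wedge\overline{\partial}u=\tfrac12\,i\partial\overline{\partial}u^2-u\cdot i\partial\overline{\partial}u$ applied to $u=\log\rho$, combined with $\omega=-i\partial\overline{\partial}\log\rho$ near~$M$, yields the local identity
\begin{gather*}
i\partial\log\rho\wedge\overline{\partial}\log\rho\wedge\omega^n=\tfrac12\,i\partial\overline{\partial}(\log\rho)^2\wedge\omega^n+\log\rho\cdot\omega^{n+1}
\end{gather*}
in a collar of~$M$; away from the collar the discrepancy between the two sides is a smooth form on $\overline X$, contributing nothing to the $\log\epsilon$ coefficient. Thus the integrand reduces to a $\Delta^\prime$-expression in $(\log\rho)^2$ plus a volume-type term.

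Applying the construction behind Lemma~\ref{Delta} with $(\log\widetilde\tau)^2$ in place of $\widetilde f\log\widetilde\tau$ produces $F, G\in C^\infty(\overline X)$ with $F=O(\rho)$ such that
\begin{gather*}
\Delta^\prime\bigl((\log\rho)^2-F-G\rho^{n+1}\log\rho\bigr)=2(n+1)\log\rho+c+O\bigl(\rho^\infty\bigr)
\end{gather*}
for a constant $c$, with $\tau^{-n-1}G|_M$ a specific multiple of $Q^\prime$ inherited from the ambient identity $\widetilde\Delta^{n+1}(\log\widetilde\tau)^2|_{\mathcal N}=Q^\prime$. Substituting into the previous identity and applying Green's formula~\eqref{Green2} as in the proof of Lemma~\ref{Q-lp}, the smooth pieces and the singular-power-of-$\epsilon$ log-terms cancel between the two parts of the decomposition, and only the $G\rho^{n+1}\log\rho$ correction produces a constant-times-$\log\epsilon$ contribution; collecting constants yields $\operatorname{lp}\int_{\rho>\epsilon}i\partial\log\rho\wedge\overline{\partial}\log\rho\wedge\omega^n=\frac{(-1)^n}{2(n!)^2}\overline{Q}^\prime$.

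The main obstacle is the asymptotic construction of $F$ and $G$: one must trace the ambient definition of $Q^\prime$ through the intertwining $\boldsymbol\rho\widetilde\Delta=\Delta$, being careful that $(\log\rho)^2$ is not homogeneous on $\widetilde X$, and verify that $G|_M$ records exactly $Q^\prime$ rather than a combination involving $P^\prime$ of lower-order terms. A secondary bookkeeping task is to show the cancellation of the singular log-terms between the $\Delta^\prime(\log\rho)^2$ contribution and $\log\rho\cdot\omega^{n+1}$, which is needed for the clean final constant.
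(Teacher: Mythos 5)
First, a point of context: the paper does not actually prove this statement. Theorem~\ref{total-Q-prime} is imported from \cite[Theorem~5.6]{Hirachi}, and the only argument the paper supplies is the one-sentence remark following it, namely that the Stein hypothesis and the global identity $\omega=-i\partial\overline{\partial}\log\rho$ assumed in \cite{Hirachi} can be weakened to the same identity near $M$ in $X$, because the coefficient of $\log\epsilon$ is determined by the behavior of the integrand near the boundary. So your proposal is not competing with a proof in this paper but with the proof in \cite{Hirachi}, and a reviewer grading against this paper would only have asked for that localization remark.

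As a reconstruction of Hirachi's argument, your outline has the right shape --- the polarization identity $i\partial u\wedge\overline{\partial}u=\tfrac12 i\partial\overline{\partial}u^2-u\, i\partial\overline{\partial}u$ together with an asymptotic solution of a Laplace equation whose $\rho^{n+1}\log\rho$ coefficient records the curvature quantity is indeed the engine, in parallel with Lemma~\ref{Q-lp} and Lemma~\ref{Delta}. But as written it is a plan rather than a proof, because the two steps you defer as ``obstacles'' are exactly where the content of the theorem lies. Concretely: (i) $Q^\prime$ is defined by $\widetilde\Delta^{n+1}(\log\widetilde\tau)^2|_{\mathcal N}$, whereas your decomposition produces $(\log\rho)^2$ with $\log\rho=\log\boldsymbol\rho-\log\widetilde\tau$; expanding the square yields $(\log\boldsymbol\rho)^2$ and the cross term $-2\log\boldsymbol\rho\,\log\widetilde\tau$, and the identity $\widetilde\Delta(\log\rho)^2=2\log\rho\,\widetilde\Delta\log\rho-2|\widetilde\nabla\log\rho|^2$ reintroduces $|\partial\log\rho|^2$, which is (up to normalization) the integrand you started from --- so it is not automatic that the resulting $G|_M$ is a multiple of $Q^\prime$ alone rather than a combination with further boundary terms, and the computation must be organized to avoid circularity. (ii) The piece $\log\rho\cdot\omega^{n+1}$ and the boundary term $N\big(G\rho^{n+1}\log\rho\big)$ in \eqref{Green2} each produce contributions of the form $\int_\epsilon^{}\rho^{-1}\log\rho\,{\rm d}\rho$, i.e.\ potential $(\log\epsilon)^2$ terms; showing that these cancel and that the surviving $\log\epsilon$ coefficient carries precisely the constant $\frac{(-1)^n}{2(n!)^2}$ is the actual computation, not bookkeeping. (A smaller point: the independence of the log coefficient from the choice of $r$ is asserted in \cite[Proposition~4.1]{Seshadri} for $\operatorname{vol}_g$, and needs the analogous expansion of $i\partial\log\rho\wedge\overline{\partial}\log\rho\wedge\omega^n$ to apply here.) Until (i) and (ii) are carried out, your argument shows only that the left-hand side of \eqref{Q-prime-lp} has a $\log\epsilon$ coefficient expressible through some boundary invariant, not that this invariant is $\overline{Q}^\prime$ with the stated normalization.
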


In \cite[Theorem 5.6]{Hirachi}, it is assumed that $X$ is Stein and $\omega=-i\partial\overline{\partial}\log\rho$ globally on $X$, but as the logarithmic term is determined by the boundary behavior, it is suf\/f\/icient to assume $\omega=-i\partial\overline{\partial}\log\rho$ near $M$ in $X$ as above.

\begin{proof}[Proof of Theorem \ref{CR-invariance}] Let $\tau$, $\rho$ be as in Theorem~\ref{total-Q-prime} and let $\widehat\rho$ be the def\/ining function corresponding to another pseudo-Einstein CR scale $\widehat\tau$. Then we can write as $\widehat\rho=e^{\Upsilon}\rho$ with $\Upsilon\in C^\infty(\overline X)$ such that $\partial\overline{\partial}\Upsilon=0$ near~$M$ in~$\overline X$.

Using the def\/ining function $\rho$ for $r$ in the formula \eqref{Q-prime-lp}, we compute as
\begin{gather*}
\operatorname{lp}\int_{\rho>\epsilon} i\partial\log \widehat\rho\wedge \overline{\partial}\log\widehat\rho\wedge \omega^n=
\operatorname{lp}\int_{\rho>\epsilon} i(\partial\log \rho+\partial\Upsilon)\wedge (\overline{\partial}\log\rho+\overline{\partial}\Upsilon)\wedge \omega^n \\
\hphantom{\operatorname{lp}\int_{\rho>\epsilon} i\partial\log \widehat\rho\wedge \overline{\partial}\log\widehat\rho\wedge \omega^n}{}
 =\operatorname{lp}\int_{\rho>\epsilon} i\partial\log\rho\wedge \overline{\partial}\log\rho\wedge \omega^n +\operatorname{lp}\int_{\rho>\epsilon} i\partial\Upsilon\wedge \overline{\partial}\Upsilon\wedge \omega^n \\
\hphantom{\operatorname{lp}\int_{\rho>\epsilon} i\partial\log \widehat\rho\wedge \overline{\partial}\log\widehat\rho\wedge \omega^n=}{}
 +2\operatorname{Re}\ \operatorname{lp}\int_{\rho>\epsilon} i\partial\log\rho\wedge \overline{\partial}\Upsilon\wedge \omega^n.
\end{gather*}
The second term in the last line is
\begin{gather*}
\operatorname{lp}\int_{\rho>\epsilon} i\partial\Upsilon\wedge \overline{\partial}\Upsilon\wedge \omega^n=\operatorname{lp}\int_{\rho=\epsilon} i\Upsilon\overline{\partial}\Upsilon\wedge \omega^n+\operatorname{lp}\int_{\rho>\epsilon}({\rm cpt\ supp})=0.
\end{gather*}
Since $\omega={\rm d}(\vartheta/\rho)$ near~$M$ in~$X$, we have
\begin{gather*}
\int_{\rho>\epsilon} i\partial\log\rho\wedge \overline{\partial}\Upsilon\wedge \omega^n =\log\epsilon
\int_{\rho=\epsilon} i\overline{\partial}\Upsilon\wedge \omega^n+\int_{\rho>\epsilon} ({\rm cpt\ supp}) \\
\hphantom{\int_{\rho>\epsilon} i\partial\log\rho\wedge \overline{\partial}\Upsilon\wedge \omega^n}{}
 =\epsilon^{-n}\log\epsilon\int_{\rho=\epsilon} i\overline{\partial}\Upsilon\wedge (d\vartheta)^n+\int_{\rho>\epsilon} ({\rm cpt\ supp}) \\
 \hphantom{\int_{\rho>\epsilon} i\partial\log\rho\wedge \overline{\partial}\Upsilon\wedge \omega^n}{}
 =\epsilon^{-n}\log\epsilon\int_{\rho>\epsilon} ({\rm cpt\ supp})+\int_{\rho>\epsilon} ({\rm cpt\ supp}),
\end{gather*}
which implies that the third term is also~0. Thus, $\overline{Q}^\prime$ is independent of the choice of a pseudo-Einstein CR scale~$\tau$.\end{proof}

\subsection*{Acknowledgements}

The author would like to thank the referees for their comments which were helpful for the improvement of the manuscript.

\pdfbookmark[1]{References}{ref}
\LastPageEnding

\end{document}